\definecolor{mblue}{rgb}{0,0,.8}
\newcommand{\N}{\mathbb N}
\newcommand{\Z}{\mathbb Z}
\newcommand{\Q}{\mathbb Q}
\newcommand{\Qbar}{\overline{\Q}}
\newcommand{\F}{\mathbb F}
\newcommand{\p}{\mathfrak p}
\newcommand{\Pp}{\mathfrak P}
\newcommand{\q}{\mathfrak q}
\newcommand{\Qq}{\mathfrak Q}
\newcommand{\OO}{\mathcal O}
\newtheorem{thm}{Theorem}
\newtheorem{lem}{Lemma}
\newtheorem{prop}{Proposition}
\newtheorem{cor}{Corollary}
\DeclareMathOperator{\GL}{GL} \DeclareMathOperator{\SL}{SL}  
\DeclareMathOperator{\Gal}{Gal}   
  \DeclareMathOperator{\Tr}{tr} \DeclareMathOperator{\ord}{ord}
\def\dash---{\thinspace---\hskip.16667em\relax}
\begin{document}

\title[On congruences mod ${\mathfrak p}^m$ between eigenforms.]{On congruences mod ${\mathfrak p}^m$ between eigenforms and their attached Galois representations.}
\author{Imin Chen, Ian Kiming and Jonas B. Rasmussen}
\address[Imin Chen]{Department of Mathematics, Simon Fraser University, 8888 University Drive, Burnaby, B.C., V5A 1S6, Canada}
\email{\href{mailto:ichen@math.sfu.ca}{ichen@math.sfu.ca}}
\address[Ian Kiming, Jonas B.\ Rasmussen]{Department of Mathematical Sciences, University of Copenhagen, Universitetsparken 5, 2100 Copenhagen \O , Denmark}
\email{\href{mailto:kiming@math.ku.dk}{kiming@math.ku.dk}}
\email{\href{mailto:jonas@math.ku.dk}{jonas@math.ku.dk}}



\begin{abstract} Given a prime $p$ and cusp forms $f_1$ and $f_2$ on some $\Gamma_1(N)$ that are eigenforms outside $Np$ and have coefficients in the ring of integers of some number field $K$, we consider the problem of deciding whether $f_1$ and $f_2$ have the same eigenvalues mod $\p^m$ (where $\p$ is a fixed prime of $K$ over $p$) for Hecke operators $T_{\ell}$ at all primes $\ell\nmid Np$.

When the weights of the forms are equal the problem is easily solved via an easy generalization of a theorem of Sturm. Thus, the main challenge in the analysis is the case where the forms have different weights. Here, we prove a number of necessary and sufficient conditions for the existence of congruences mod $\p^m$ in the above sense.

The prime motivation for this study is the connection to modular mod $\p^m$ Galois representations, and we also explain this connection.
\end{abstract}

\maketitle
\section{Introduction}\label{intro} Let $N\in\N$ and let $p$ be a fixed prime number.
\smallskip

Suppose that we are given cusp forms $f_1 = \sum a_n(f_1)q^n$ and $f_2 = \sum a_n(f_2)q^n$ (where $q := e^{2\pi iz}$) on $\Gamma_1(N)$ of weights $k_1$ and $k_2$, respectively, and with coefficients in $\OO_K$ where $K$ is some number field. We will assume in all that follows that $f_1$ and $f_2$ are normalized, i.e., that $a_1(f_1)=a_1(f_2)=1$.
\smallskip

We say that $f_1$ and $f_2$ are {\it eigenforms outside $Np$} if they are (normalized) eigenforms for all Hecke operators $T_{\ell}$ for primes $\ell$ with $\ell \nmid Np$. The corresponding eigenvalues for such $T_{\ell}$ acting on $f_i$ are then exactly the coefficients $a_{\ell}(f_i)$.
\smallskip

Now fix a prime $\p$ of $K$ over $p$. If $f_i$ is an eigenform outside $Np$, and if $m\in\N$, there is attached to $f_i$ a `mod $\p^m$' Galois representation:
$$
\rho_{f_i,\p^m} : ~ G_{\Q} := \Gal(\Qbar/\Q) \rightarrow \GL_2(\OO_K/\p^m)
$$
obtained by making the $p$-adic representation attached to $f_i$ integral with coefficients in $\OO_K$ and then reducing modulo $\p^m$. The representation $\rho_{f_i,\p^m}$ is unramified outside $Np$ and we have:
$$
\Tr \rho_{f_i,\p^m}(\mathrm{Frob}_{\ell}) = (a_{\ell}(f_i) \bmod{\p^m}) \leqno{(\ast)}
$$
for primes $\ell\nmid Np$.

By a theorem of Carayol, cf.\ Th\'{e}or\`{e}me 1 of \cite{Carayol}, combined with the Chebotarev density theorem, the representation $\rho_{f_i,\p^m}$ is determined up to isomorphism by the property $(\ast)$ for primes $\ell\nmid Np$ if we additionally suppose that the mod $\p$ representation $\rho_{f_i,\p}$ is absolutely irreducible.
\smallskip

Motivated by a study of the arithmetic properties of modular mod $\p^m$ Galois representations \cite{icik}, we found it natural to prepare the ground for numerical experimentation with these representations. As is obvious from the above, the key to this is to obtain a computationally decidable criterion for when we have $a_{\ell}(f_1) \equiv a_{\ell}(f_2) \pod{\p^m}$ for all primes $\ell\nmid Np$, if $f_1$ and $f_2$ as above are given cusp forms that are eigenforms outside $Np$.
\smallskip

Now, for the case $m=1$, and if the weights $k_1$ and $k_2$ are equal, there is a well-known theorem of Sturm that gives a necessary and sufficient condition for the forms to be congruent mod $\p$ in the sense that all their Fourier coefficients are congruent mod $\p$. It turns out to be very easy to generalize Sturm's theorem to the cases $m>1$ provided that we still have $k_1=k_2$. Then, still under the assumption that the weights are equal, a simple twisting argument allows us to discuss the case of eigenforms outside $Np$.
\smallskip

For various reasons we are interested in also considering cases where the weights are distinct and this turns out to present a genuinely new challenge.
\smallskip

We study two distinct approaches to this challenge. Under favorable circumstances these approaches both result in computable necessary and sufficient conditions for the forms to be `congruent mod $\p^m$ outside $Np$' in the above sense.

The first approach is to generalize a theorem of Serre-Katz on $p$-adic modular forms, cf.\ Cor.\ 4.4.2 of \cite{Katz} which -- under certain restrictions on the levels of the forms -- gives a necessary congruence between the weights for the forms to be congruent mod $\p^m$. In the Serre-Katz theorem one needs to assume that the prime $\p$ of the field $K$ of coefficients is unramified relative to $p$ in $\Q$. We are able to generalize this theorem to cases where $\p$ is ramified over $p$.

Under certain technical restrictions, in particular that the ramification index relative to $p$ of the Galois closure of the field $K$ of coefficients is not divisible by $p$, and that $p$ is odd, our Theorem \ref{theorem1} results in the desired computable necessary and sufficient conditions. See Corollary \ref{cor1} below.

The second approach is via a study of the determinants of the attached mod $\p^m$ representations. Again under certain technical restrictions, here notably a restriction on the nebentypus characters of the forms, our Theorem \ref{theorem2} leads to the desired computable necessary and sufficient conditions. Cf.\ Corollary \ref{cor2} below.

It is remarkable that these two rather distinct approaches result -- under the technical restrictions alluded to above -- in necessary and sufficient conditions that are close to being equivalent.
\smallskip

We illustrate the results by a few numerical examples.
\smallskip

Finally, let us mention that Kohnen has considered similar questions, but only in the mod $\p$ case, see \cite{kohnen}, and our results can also be regarded as a generalization of Kohnen's results to the mod $\p^m$ setting.

\subsection{Notation} To formulate our results, let us introduce the following notation:
\smallskip

Define
$$
N' := \left\{ \begin{array}{ll} N\cdot \prod_{q\mid N} q ~,& \mbox{if $p\mid N$}\\ & \\ N\cdot p^2\cdot \prod_{q\mid N} q ~,& \mbox{if $p\nmid N$} \end{array} \right.
$$
where the products are over prime divisors $q$ of $N$. Put:
$$
\mu := [\SL_2(\Z):\Gamma_1(N)] ~,\quad \mu' := [\SL_2(\Z):\Gamma_1(N')] ~,
$$
and fix the following notation:
$$
\begin{array}{lll}
m & : & \mbox{a natural number},\\
k & := & \max\{k_1,k_2\},\\
\p & : & \mbox{a fixed prime of $K$ over $p$},\\
e & := & e(\p/p), \mbox{ the ramification index of $\p$ over $p$},\\
L & : & \mbox{Galois closure of $K/\Q$},\\
e(L,p) & : & \mbox{the ramification index of $L$ relative to $p$ in $\Q$},\\
r & : & \mbox{largest power of $p$ dividing the ramification index $e(L,p)$},\\
\ell & : & \mbox{a (not fixed) prime number}.
\end{array}
$$

For a natural number $a$ and a modular form $h=\sum c_n q^n$ on some $\Gamma_1(M)$ and coefficients $c_n$ in $\OO_K$ we define:
$$
\ord_{\p^a}h = \inf\big\{n \mid ~ \p^a \nmid (c_n) \big\},
$$
with the convention that $\ord_{\p^a}h = \infty$ if $\p^a \mid (c_n)$ for all $n$.

We say that $f_1$ and $f_2$ are congruent modulo $\p^a$ if $\ord_{\p^a}(f_1-f_2) = \infty$, and we denote this by $f_1 \equiv f_2 \pod {\p^a}$.

\subsection{Results} The following proposition is the first, basic observation, and is an easy generalization of a well-known theorem of Sturm, cf.\ \cite{Sturm}.

\begin{prop} \label{proposition1} Suppose that $N$ is arbitrary, but that $f_1$ and $f_2$ are forms on $\Gamma_1(N)$ of the same weight $k=k_1=k_2$ and coefficients in $\OO_K$.
\smallskip

Then $\ord_{\p^m}(f_1-f_2) > k\mu/12$ implies $f_1 \equiv f_2 \pod {\p^m}$.
\end{prop}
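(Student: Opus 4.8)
The plan is to reduce the statement to the classical case $m=1$ of Sturm's theorem over the residue field $\F := \OO_K/\p$, which I take as known. Write $g := f_1-f_2$; this is a cusp form of weight $k$ on $\Gamma_1(N)$ with coefficients $c_n\in\OO_K$, and by definition the hypothesis $\ord_{\p^m}g > k\mu/12$ says precisely that $\p^m \mid (c_n)$ for every $n \le k\mu/12$. I must show that then $\p^m \mid (c_n)$ for \emph{all} $n$, i.e.\ $g \equiv 0 \pod{\p^m}$.

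First I would pass to the localization $\OO_{K,\p}$, a discrete valuation ring with uniformizer $\pi$, normalized valuation $v_\p$, and residue field $\F$. Arguing by contradiction, suppose $g \not\equiv 0 \pod{\p^m}$ and set $j := \min_n v_\p(c_n)$, the $\p$-content of $g$; by assumption $j < m$. The key device is to clear this content: put $\tilde g := \pi^{-j}g$. Multiplication by the nonzero scalar $\pi^{-j}$ does not affect modularity, so $\tilde g$ is again a weight-$k$ form on $\Gamma_1(N)$, now with coefficients in $\OO_{K,\p}$ and with at least one coefficient of valuation $0$. Hence its reduction $\overline{\tilde g}$ modulo $\p$ is a \emph{nonzero} modular form of weight $k$ on $\Gamma_1(N)$ over $\F$.

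Next I would examine the low-order coefficients of $\tilde g$. For each $n \le k\mu/12$ we have $v_\p(c_n) \ge m > j$, so the $n$-th coefficient of $\tilde g$ has strictly positive valuation and therefore vanishes in $\F$. Thus the $q$-expansion of the nonzero form $\overline{\tilde g}$ has $\ord > k\mu/12$. Applying the characteristic-$p$ version of Sturm's bound \dash--- a nonzero modular form of weight $k$ on $\Gamma_1(N)$ over a field cannot vanish to order exceeding $k\mu/12$ \dash--- forces $\overline{\tilde g}=0$, a contradiction. Therefore $g \equiv 0 \pod{\p^m}$, which is the claim.

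The genuinely substantive input, and the step I expect to be the main obstacle, is the base case itself: the bound on the order of vanishing of a nonzero mod-$p$ modular form. Over $\C$ this is the valence formula (a weight-$k$ form on $\Gamma_1(N)$ has at most $k\mu/12$ zeros in a fundamental domain), but one needs it in characteristic $p$. This can be secured either through the geometric theory of mod-$p$ modular forms (sections of a line bundle of the appropriate degree on the modular curve, following Deligne--Rapoport and Katz) or by the more elementary route of lifting to characteristic zero, multiplying by a suitable power of a form with unit $q$-expansion, and reducing. A secondary point, purely bookkeeping, is to confirm that $\mu = [\SL_2(\Z):\Gamma_1(N)]$ is the correct index to feed into the bound for forms on $\Gamma_1(N)$; the possible presence of a nebentypus is harmless, as it only decomposes the space of forms further.
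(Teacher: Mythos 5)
Your proof is correct and is essentially the paper's own argument: both proofs localize at $\p$, divide the difference $f_1-f_2$ by a power of a uniformizer to produce a form with unit content, and reduce everything to the $m=1$ case, i.e.\ Sturm's theorem mod $\p$. The only differences are organizational: the paper inducts on $m$ (dividing by $\pi^{m-1}$ once the induction hypothesis gives $\p^{m-1}$-divisibility of all coefficients) where you clear the exact $\p$-content $j$ in one step and argue by contradiction, and where you invoke a characteristic-$p$ valence bound for forms over the residue field, the paper simply quotes Sturm's congruence theorem after clearing denominators via the bounded-denominators property \dash--- the same input in arithmetic rather than geometric form.
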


Part {\it (i)} of the following theorem is a slight generalization of theorems of Serre and Katz, cf.\ \cite{Serre}, Th\'{e}or\`{e}me 1, \cite{Katz}, \ Corollary 4.4.2.

\begin{thm}\label{theorem1} Assume $N\ge 3$, and let $f_1$ and $f_2$ be normalized cusp forms on $\Gamma_1(N)$ of weights $k_1$ and $k_2$, respectively, and with coefficients in $\OO_K$.
\smallskip

\noindent (i) Assume additionally that $p\nmid N$ and that $f_1$ and $f_2$ are forms on $\Gamma_1(N) \cap \Gamma_0(p)$.
\smallskip

Then if $f_1 \equiv f_2 \pod {\p^m}$ we have $k_1 \equiv k_2 \pod {p^s(p-1)}$ with the non-negative integer $s$ defined as follows:
$$
s := \left\{ \begin{array}{ll} \max\{ 0, \lceil \frac{m}{e} \rceil - 1 - r\} ~,& \mbox{if $p\ge 3$} \\ \max\{ 0, \alpha(\lceil \frac{m}{e} \rceil  - r ) \} ~,& \mbox{if $p=2$} \end{array} \right.
$$
with $\alpha(u)$ defined for $u\in\Z$ as follows:
$$
\alpha(u) := \left\{ \begin{array}{ll} u-1 ~,& \mbox{if $u\le 2$} \\ u-2 ~,& \mbox{if $u\ge 3$} ~.\end{array} \right.
$$
\medskip

\noindent (ii) Let $N$ be arbitrary, but assume $3\mid N$ if $p=2$, and $2\mid N$ if $p=3$.
\smallskip

Suppose that $k_1 \equiv k_2 \pod {p^s(p-1)}$.
\smallskip

Then, if $a_{\ell}(f_1) \equiv a_{\ell}(f_2) \pod {\p^m}$ for all primes $\ell \leq k\mu' /12$ with $\ell \nmid Np$, we have
$$
a_{\ell}(f_1) \equiv a_{\ell}(f_2) \pod {\p^{\min \{ e\cdot (s+1) , m \} }}
$$
for all primes $\ell \nmid Np$.
\smallskip

In particular, if either $m\le e$ or ($p>2$ and $r=0$) or ($p=2$, $r=0$, and $m\le 2e$), we have
$$
a_{\ell}(f_1) \equiv a_{\ell}(f_2) \pod {\p^m}
$$
for all primes $\ell \nmid Np$.
\end{thm}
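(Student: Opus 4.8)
The plan is to treat both forms within the theory of $p$-adic modular forms in the sense of Serre and Katz, to extract the weight congruence in (i) from the uniqueness of the weight of a nonzero $p$-adic modular form, and then to reverse this mechanism in (ii) and feed it into the Sturm bound of Proposition \ref{proposition1}. For part (i): since $p\nmid N$ and $f_1,f_2$ live on $\Gamma_1(N)\cap\Gamma_0(p)$, I would view them as $p$-adic modular forms of tame level $N$ over the completion $\OO_{K,\p}$; because $a_1(f_i)=1$, both are nonzero modulo $\p$. The classical input is $E_{p-1}\equiv 1\pmod p$, with the coefficients of $E_{p-1}-1$ having $\p$-valuation exactly $e$, so that $E_{p-1}^{p^j}\equiv 1\pmod{\p^{e(j+1)}}$ and multiplication by a power of $E_{p-1}$ moves the weight by a multiple of $p-1$ while perturbing the $q$-expansion only modulo a controlled power of $\p$. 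The uniqueness-of-weight statement (Cor.\ 4.4.2 of \cite{Katz}) attaches to a nonzero $p$-adic form the weight character $t\mapsto t^{k}$ of $\Z_p^\times$; applied to $f_1\equiv f_2\pmod{\p^m}$ it forces $t^{k_1}\equiv t^{k_2}$ modulo a power of $\p$, and evaluating on the Teichm\"uller subgroup gives $(p-1)\mid(k_1-k_2)$, while evaluating at $t=1+p$ and computing the order of $1+p$ in $(\OO_K/\p^m)^\times$ produces the $p$-power part.

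The difficulty, and what I expect to be the crux of the whole theorem, is that Katz's theorem is formulated over an unramified base, so over a ramified $\OO_K$ the weight of a $p$-adic form is only pinned down up to the wild ramification. Concretely I would descend along $K\hookrightarrow L\hookrightarrow L_\Pp$ and write $e(L,p)=p^r e_0$ with $p\nmid e_0$; the tame part $e_0$ is harmless, but the wild part $p^r$ cannot be separated cleanly from the weight character and degrades the $p$-power from the value $p^{\lceil m/e\rceil-1}$ suggested by the naive order computation to $p^{\lceil m/e\rceil-1-r}$, which is exactly the $-r$ in the definition of $s$. That this weaker conclusion is the correct one is confirmed by the sharpness of part (ii). The prime $p=2$ has to be treated separately because of the anomalous structure of $(\Z_2/2^a)^\times$, and this is the source of the auxiliary correction $\alpha$.

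For part (ii) I would run the computation in reverse. The hypothesis $k_1\equiv k_2\pmod{p^s(p-1)}$ yields, for every prime $\ell\nmid p$, the congruence $\ell^{k_1}\equiv\ell^{k_2}\pmod{\p^{e(s+1)}}$, since $\ell^{(p-1)p^s}\equiv 1$ to that power. Multiplying the form of smaller weight by the appropriate power of $E_{p-1}$ — which equals $1$ modulo $\p^{e(s+1)}$ and raises the weight to the common value $k$ at the cost of passing to level $N'$ — reduces matters to two honest weight-$k$ forms of level $N'$. The congruence just displayed makes the factor $\ell^{k-1}$ in the Hecke recursion $a_{\ell^{j+1}}=a_\ell a_{\ell^j}-\ell^{k-1}\chi(\ell)a_{\ell^{j-1}}$ agree for the two weights modulo $\p^{\min\{e(s+1),m\}}$, so that, once the nebentypus characters are matched (a point the hypotheses on $N$ are designed to control), equality of the $a_\ell$ at primes propagates to all prime powers and, by multiplicativity, to all indices prime to $Np$. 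A standard projection onto the coefficients coprime to $Np$, realized by averaging over twists by the Dirichlet characters of the bad primes, removes the uncontrolled bad-prime coefficients and is precisely what inflates the level from $N$ to $N'$; after it, Proposition \ref{proposition1} applied at level $N'$ lets one certify the congruence from the primes $\ell\le k\mu'/12$ alone.

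Finally, the \emph{in particular} clause is pure bookkeeping: in each of the three listed cases one checks directly from the formula for $s$ that $e(s+1)\ge m$, whence $\min\{e(s+1),m\}=m$ and the conclusion improves to a congruence modulo $\p^m$. I therefore expect the genuinely hard part of the argument to be the ramified generalization of Katz's uniqueness of weight in part (i) — making precise how much of the congruence modulo $\p^m$ survives the passage through the wildly ramified extension $L_\Pp/\Q_p$ and thereby pinning down the exact loss $p^r$ — rather than the formal manipulations of part (ii), which amount to a weight-adjusted Sturm argument.
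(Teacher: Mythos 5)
Your part (ii) is essentially the paper's argument (Eisenstein twisting plus projection to coefficients coprime to $Np$ plus Proposition \ref{proposition1} at level $N'$), but your part (i) has a genuine gap exactly at the point you yourself flag as the crux. You correctly observe that Katz's Cor.\ 4.4.2 is formulated over an unramified base, but you never actually bridge the ramified case: you assert that the wild part $p^r$ of $e(L,p)$ ``degrades'' the $p$-power by exactly $r$, and you offer the sharpness of part (ii) as confirmation \dash--- that is a statement of the desired answer, not a proof. Worse, your proposed move, passing along $K\hookrightarrow L\hookrightarrow L_\Pp$, goes in the wrong direction: all of these fields are (possibly) ramified at $p$, so Katz's theorem still does not apply to anything in sight. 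The paper's mechanism, which is the actual content of part (i), is a \emph{descent to the inertia field}: fix $\Pp\mid\p$ in $L$, let $L_0$ be the fixed field of the inertia group $I(\Pp/p)$ with $\p_0=\Pp\cap L_0$ (so $(L_0)_{\p_0}/\Q_p$ is unramified, i.e.\ a Witt ring $W$), let $I(\Pp/p)$ act on Fourier coefficients, and form the traces $F_i=\sum_{\sigma\in I(\Pp/p)}\sigma(f_i)$. These have coefficients in $L_0$, and $f_1\equiv f_2\pmod{\p^m}$ yields $F_1\equiv F_2\pmod{\p_0^{\lceil m/e\rceil}}$ by Lemma \ref{lemma}. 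Only now can Katz be invoked, over $W$. Moreover, the loss of exactly $r$ has a concrete source that your proposal cannot see: the coefficient of $q$ in each $F_i$ equals $\# I(\Pp/p)=e(L,p)$, whose $p$-part is $p^r$; to meet Katz's hypothesis that one of the forms be nonzero mod $p$ one divides both $F_i$ by the largest common power $p^a$, and $a\le r$ is what caps the degradation, giving the congruence modulo $\p_0^{\max\{0,\lceil m/e\rceil-r\}}$ and hence the exponent $s$ of the theorem. Without this trace construction (or some substitute), your part (i) is unproven.

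Two smaller corrections to part (ii). First, the hypotheses $3\mid N$ if $p=2$ and $2\mid N$ if $p=3$ have nothing to do with ``matching nebentypus characters''; they are needed because $E_{p-1}$ does not exist at level $1$ for $p\in\{2,3\}$, and one must instead use a weight-$1$ Eisenstein series on $\Gamma_1(3)$ (for $p=2$) or a weight-$2$ Eisenstein series on $\Gamma_1(2)$ (for $p=3$) that is congruent to $1$ mod $p$; your uniform appeal to ``$E_{p-1}$'' silently assumes $p\ge 5$. Second, the level inflation to $N'$ comes solely from the projection onto coefficients coprime to $Np$ (Miyake's Lemma 4.6.5, which is indeed your averaging over twists), not from the Eisenstein multiplication, and the propagation from prime indices to all indices coprime to $Np$ requires the forms to be eigenforms mod $\p^m$ outside $Np$ \dash--- the paper isolates precisely this step as Lemma \ref{twist0}, applied to $\tilde f=E^{tp^s}f_1$ and $f_2$, which by then have equal weight.
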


The following corollary is an immediate consequence of Theorem \ref{theorem1}.

\begin{cor}\label{cor1} Retain the setup and notation of Theorem \ref{theorem1}, and assume that $p$ is odd, $r=0$, that $N$ is prime to $p$, that $3\mid N$ if $p=2$, and $2\mid N$ if $p=3$, and that $f_1$ and $f_2$ are forms on $\Gamma_1(N) \cap \Gamma_0(p)$.
\smallskip

Then we have $a_{\ell}(f_1) \equiv a_{\ell}(f_2) \pod {\p^m}$ for all primes $\ell \nmid Np$ if and only if this congruence holds for all primes $\ell \leq k\mu' /12$ with $\ell \nmid Np$ and we have the congruence
$$
k_1 \equiv k_2 \pod {p^s(p-1)}
$$
between the weights.
\end{cor}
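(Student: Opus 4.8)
The statement is a biconditional, and my plan is to read its two directions off the two halves of Theorem \ref{theorem1}: the ``if'' (sufficiency) direction from part (ii), and the ``only if'' (necessity) direction from part (i). Write $P$ for the assertion that $a_\ell(f_1)\equiv a_\ell(f_2)\pmod{\p^m}$ for all $\ell\nmid Np$, write $Q$ for the same congruences restricted to the finitely many primes $\ell\le k\mu'/12$ with $\ell\nmid Np$, and write $R$ for the weight congruence $k_1\equiv k_2\pmod{p^s(p-1)}$. The corollary asserts $P\iff(Q\wedge R)$, and I would prove the two implications separately, checking along the way that all the level hypotheses of Theorem \ref{theorem1} are met: $N\ge 3$ and $p\nmid N$ are inherited, the condition ``$2\mid N$ if $p=3$'' is assumed, and since $p$ is odd the ``$3\mid N$ if $p=2$'' clause is vacuous. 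Because $p\nmid N$, the modulus $N'$ and hence $\mu'$ and the bound $k\mu'/12$ are the very objects appearing in part (ii).

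For the direction $(Q\wedge R)\Rightarrow P$ I would simply feed $Q$ and $R$ into Theorem \ref{theorem1}(ii), obtaining $a_\ell(f_1)\equiv a_\ell(f_2)\pmod{\p^{\min\{e(s+1),\,m\}}}$ for all $\ell\nmid Np$. It then remains to observe that under the corollary's standing hypotheses this modulus is exactly $\p^m$. Indeed, $p$ is odd and $r=0$, so $s=\max\{0,\lceil m/e\rceil-1\}=\lceil m/e\rceil-1$ (using $m\ge 1$), whence $e(s+1)=e\lceil m/e\rceil\ge m$ and $\min\{e(s+1),m\}=m$. This is precisely the ``$p>2$ and $r=0$'' case singled out in the ``in particular'' clause of part (ii), so I could equally invoke that clause directly to produce $P$.

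For the direction $P\Rightarrow(Q\wedge R)$ the implication $P\Rightarrow Q$ is immediate, since $Q$ is the restriction of $P$ to a subset of primes. The substance is $P\Rightarrow R$, the necessity of the weight congruence, and here I would appeal to Theorem \ref{theorem1}(i). The main obstacle is a genuine mismatch of hypotheses: part (i) deduces $R$ from a \emph{full} congruence of $q$-expansions, $f_1\equiv f_2\pmod{\p^m}$ in the sense $\ord_{\p^m}(f_1-f_2)=\infty$, whereas $P$ only supplies the eigenvalue congruences at $\ell\nmid Np$. These are not the same: for eigenforms of unequal weight the recursion $a_{\ell^2}=a_\ell^2-\chi(\ell)\ell^{k-1}$ shows that even the prime-power coefficients prime to $Np$ need not be congruent, and the coefficients at primes dividing $N$ are unconstrained by $P$ altogether.

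Thus the real work of the ``only if'' direction is to bridge from eigenvalue congruence outside $Np$ to an input that Theorem \ref{theorem1}(i) can accept. The plan here is to promote $P$ to a full $\p^m$-congruence between a suitable pair of auxiliary forms of the \emph{same} weights as $f_1,f_2$ -- for instance by replacing the $f_i$ with their images under a weight-preserving operation that forces the troublesome coefficients into agreement, or by reducing the matter to the associated systems of Hecke eigenvalues, to which the Serre--Katz weight comparison underlying part (i) ultimately applies. I expect this reconciliation, rather than either of the two clean applications of Theorem \ref{theorem1}, to be the crux; once a bona fide congruence of $q$-expansions modulo $\p^m$ is in hand, part (i) delivers $R$ at once and the biconditional is complete.
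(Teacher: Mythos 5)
Your sufficiency direction is correct and is exactly the paper's intended argument: the corollary's hypotheses make the ``$p>2$ and $r=0$'' case of the ``in particular'' clause of Theorem \ref{theorem1}(ii) applicable, and your verification that $s=\lceil m/e\rceil-1$, hence $e(s+1)=e\lceil m/e\rceil\ge m$, is the right computation. Likewise $P\Rightarrow Q$ is trivial. The defect is that your proof of $P\Rightarrow R$ simply does not exist: you correctly observe that Theorem \ref{theorem1}(i) requires a congruence of full $q$-expansions, $\ord_{\p^m}(f_1-f_2)=\infty$, which is strictly stronger than $P$, and you then stop, offering only unexecuted ``plans'' (a weight-preserving operation forcing the troublesome coefficients into agreement, or a reduction to eigenvalue systems). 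Moreover, the natural candidates for such a bridge run into concrete obstructions. Stripping the coefficients at indices not prime to $Np$ (Miyake's Lemma 4.6.5, as in Lemma \ref{twist0}) produces forms on $\Gamma_1(N')$, and since $p\nmid N$ we have $p^2\mid N'$, so the stripped forms are no longer forms on some $\Gamma_1(M)\cap\Gamma_0(p)$ with $p\nmid M$, and Katz's theorem underlying part (i) cannot be applied to them. Independently of the level, the coefficients at prime powers $\ell^j$ involve $\ell^{k_i-1}$ times diamond-operator terms, so forcing them to be congruent mod $\p^m$ is essentially equivalent to the determinant congruence, i.e.\ to the weight congruence one is trying to prove: that route is circular. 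One also cannot fall back on Theorem \ref{theorem2}, since the corollary assumes no absolute irreducibility of the residual representation.

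For comparison, the paper gives no proof at all: it declares the corollary ``an immediate consequence of Theorem \ref{theorem1}'', the intended skeleton being precisely yours\dash---necessity of the weight congruence from part (i), sufficiency from part (ii). So the hypothesis mismatch you isolated is real, and the authors pass over it in silence; their ``immediate'' implicitly treats the eigenvalue congruence outside $Np$ as if it supplied the $q$-expansion congruence that part (i) needs. Detecting this is genuinely valuable refereeing, but as a proof of the stated biconditional your text is incomplete: the ``only if'' half of the corollary is announced, diagnosed as the crux, and then left unproven.
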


\begin{thm}\label{theorem2} Suppose that $N$ is arbitrary, but assume that $p$ is odd and that $f_1$ and $f_2$ are normalized cusp forms on $\Gamma_1(N)$ of weights $k_1$ and $k_2$ and with nebentypus characters $\psi_1$ and $\psi_2$, respectively.

Suppose that $f_1$ and $f_2$ are eigenforms outside $Np$ and have coefficients in $\OO_K$, and that the mod $\p$ Galois representation attached to $f_1$ is absolutely irreducible.
\smallskip

View the nebentypus characters $\psi_i$ as finite order characters on $G_{\Q}$, and let the order of the character
$$
\left( \psi_2 \psi_1^{-1} \bmod{\p^m} \right) _{\mid I_p}
$$
where $I_p$ is an inertia group at $p$, be $p^{\delta}\cdot d$ with $d$ a divisor of $p-1$.
\medskip

\noindent (i) If we have $a_{\ell}(f_1) \equiv a_{\ell}(f_2) \pod {\p^m}$ for all primes with $\ell \nmid Np$ then $\delta \le \lceil \frac{m}{e} \rceil - 1$ and we have:
$$
k_1 \equiv k_2 \pmod{p^{\lceil \frac{m}{e} \rceil - 1 - \delta} \cdot (p-1)/d}
$$
so that in particular, $k_1 \equiv k_2 \pmod{p^{\lceil \frac{m}{e} \rceil - 1} \cdot (p-1)/d}$ if $\delta = 0$.
\smallskip

\noindent (ii) Suppose that
$$
k_1 \equiv k_2 \pmod{p^{\lceil \frac{m}{e} \rceil - 1} \cdot (p-1)/d} ~.
$$

Then, if $a_{\ell}(f_1) \equiv a_{\ell}(f_2) \pod {\p^m}$ for all primes $\ell \leq k\mu' /12$ with $\ell \nmid Np$ we have this congruence for all primes $\ell \nmid Np$.
\end{thm}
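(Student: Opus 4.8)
The plan is to extract everything from the determinant of the attached mod $\p^m$ representation. Recall that for a normalized eigenform $f_i$ of weight $k_i$ and nebentypus $\psi_i$ one has $\det\rho_{f_i,\p^m}=\psi_i\cdot\chi^{k_i-1}$, where $\chi$ denotes the reduction modulo $\p^m$ of the $p$-adic cyclotomic character and $\psi_i$ is viewed as a character of $G_\Q$. For part (i) I would first observe that the hypothesis $a_\ell(f_1)\equiv a_\ell(f_2)\pmod{\p^m}$ for all $\ell\nmid Np$ says, via $(\ast)$, that $\Tr\rho_{f_1,\p^m}$ and $\Tr\rho_{f_2,\p^m}$ agree on all $\mathrm{Frob}_\ell$ with $\ell\nmid Np$. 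Reducing modulo $\p$ and using that $\rho_{f_1,\p}$ is absolutely irreducible, the same holds for $\rho_{f_2,\p}$, so the theorem of Carayol quoted in the introduction gives $\rho_{f_1,\p^m}\cong\rho_{f_2,\p^m}$. Taking determinants yields the key identity
\[
\psi_2\psi_1^{-1}=\chi^{\,k_1-k_2}\pmod{\p^m}
\]
as characters of $G_\Q$.

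Next I would restrict this identity to an inertia group $I_p$ and compute the order of $\chi|_{I_p}$. Since $\chi_{\mathrm{cyc}}|_{I_p}$ surjects onto $\Z_p^*\cong\mu_{p-1}\times(1+p\Z_p)$ and $v_\p(p)=e$, the Teichm\"uller part contributes order $p-1$, while the principal part, generated by $1+p$, has order $p^{\lceil m/e\rceil-1}$ modulo $\p^m$ because $v_\p((1+p)^{p^j}-1)=e(j+1)$. Hence $\chi|_{I_p}$ has order $(p-1)p^{\lceil m/e\rceil-1}$, splitting as a tame part of order $p-1$ and a wild part of order $p^{\lceil m/e\rceil-1}$. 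Writing $a:=\lceil m/e\rceil-1$ and $t:=k_1-k_2$, the order of $(\chi|_{I_p})^{t}$ equals $p^{\max(0,\,a-v_p(t))}\cdot(p-1)/\gcd(t,p-1)$. Comparing with the order $p^{\delta}d$ of $(\psi_2\psi_1^{-1})|_{I_p}$ forces $\delta=\max(0,a-v_p(t))$ (so $\delta\le\lceil m/e\rceil-1$) and $\gcd(t,p-1)=(p-1)/d$; since $p^{a-\delta}$ and $(p-1)/d$ are coprime and both divide $t$, one gets $p^{a-\delta}(p-1)/d\mid t$, which is exactly the asserted weight congruence, and the case $\delta=0$ gives the stated special case.

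For part (ii) I would run a Sturm argument after equalizing weights. Assuming $k_1\le k_2=k$, I would raise the weight of $f_1$ to $k$ by multiplying with forms that are $\equiv1\pmod{\p^m}$: a Serre--Katz form of weight $(p-1)p^{a}$, which is $\equiv1\pmod{p^{\lceil m/e\rceil}}$ and hence $\equiv1\pmod{\p^m}$ since $v_\p(p^{\lceil m/e\rceil})\ge m$, supplies weight shifts in steps of $(p-1)p^{a}$, and the residual shift, a multiple of $(p-1)p^{a}/d$ by the weight hypothesis, is absorbed by an Eisenstein series carrying a character of order $d$ together with a compensating twist of both forms by that character (which leaves $a_\ell(f_1)\equiv a_\ell(f_2)$ unchanged). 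This produces $g_1,g_2\in M_k(\Gamma_1(N'))$ whose $q$-expansion coefficients are congruent modulo $\p^m$ to those of $f_1,f_2$, and Proposition \ref{proposition1} applied on $\Gamma_1(N')$ with the bound $k\mu'/12$ then upgrades congruence of coefficients below the bound to full congruence, from which the prime congruences $a_\ell(f_1)\equiv a_\ell(f_2)$ for all $\ell\nmid Np$ can be read off.

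The main obstacle is this bridging step in part (ii): realizing the weight shift by $(p-1)p^{a}/d$ through a character of order exactly $d$ and checking that, after the compensating twist, the auxiliary factor is genuinely $\equiv1\pmod{\p^m}$ on the relevant coefficients --- this is precisely where $d$, the restriction to $I_p$, and the ramification $e$ (through $a=\lceil m/e\rceil-1$) all enter and must be tracked simultaneously. A secondary point requiring care is that Proposition \ref{proposition1} needs all coefficients below $k\mu'/12$ to be congruent, whereas the hypothesis only supplies prime coefficients; for indices prime to $Np$ this follows from the eigenform recursions together with the determinant relation at $p$ coming from the weight congruence, while indices sharing a factor with $Np$ must be controlled through the choice of the enlarged level $N'$.
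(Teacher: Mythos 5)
Your part (i) is correct and is essentially the paper's own argument: Chebotarev plus Carayol give $\rho_{f_1,\p^m}\cong\rho_{f_2,\p^m}$, taking determinants gives $\left(\psi_2\psi_1^{-1}\bmod \p^m\right)_{\mid I_p}=\left(\chi\bmod \p^m\right)^{k_1-k_2}_{\mid I_p}$, and the order computation for $\left(\chi\bmod\p^m\right)_{\mid I_p}$ (order $p^{\lceil m/e\rceil-1}(p-1)$, using that $\p^m\cap\Z=p^{\lceil m/e\rceil}\Z$ as in Lemma \ref{lemma}) yields the weight congruence. Your explicit matching of the $p$-part and the prime-to-$p$ part of the order of $\chi^{k_1-k_2}_{\mid I_p}$ against $p^{\delta}d$ is, if anything, slightly more precise than the paper's divisibility argument.

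Part (ii), however, has a genuine gap at exactly the point you yourself flag as ``the main obstacle,'' and the mechanism you propose for it cannot work. The weight difference is $k_2-k_1=t\,p^{a}(p-1)/d$ with $a=\lceil m/e\rceil-1$, and the whole problem is to exhibit a modular form of \emph{this} weight that is $\equiv 1 \pmod{\p^m}$. Powers of the classical $E_{p-1}$ (your ``Serre--Katz form'') only shift weights in multiples of $p-1$, and the residual shift, a multiple of $(p-1)p^a/d$, cannot be ``absorbed by a compensating twist of both forms by a character'': twisting by a character never changes the weight, it only alters nebentypus and coefficients, so it contributes nothing toward equalizing $k_1$ and $k_2$. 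The missing ingredient, which the paper supplies, is the Serre/Ribet Eisenstein series (Serre, Lemme 10; Ribet, \S 2): for $p$ odd and any divisor $d$ of $p-1$ there is an Eisenstein series $E$ on $\Gamma_1(p)$ of weight exactly $\kappa=(p-1)/d$, with character $\omega^{-\kappa}$ and $\p'$-integral coefficients in $\Q(\mu_{p-1})$, satisfying $E\equiv 1\pmod{\p'}$; this is precisely where the hypothesis that $p$ is odd enters. With this form, $E^{p^{a}t}$ has weight $k_2-k_1$ and is $\equiv 1$ modulo $\p_1^m$, where $\p_1$ is a prime of the compositum $M=K\cdot\Q(\mu_{p-1})$ above $\p$; since $\Q(\mu_{p-1})/\Q$ is unramified at $p$ one has $e(\p_1/p)=e$, so $e\lceil m/e\rceil\ge m$ gives the congruence, and Lemma \ref{lemma} descends conclusions mod $\p_1^m$ back to mod $\p^m$. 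Then $\tilde f=f_1E^{p^{a}t}$ and $f_2$ have the same weight $k_2$ on $\Gamma_1(N)$ \dash--- the nebentypus change caused by $E$ is harmless because everything lives on $\Gamma_1(N)$, after the paper's preliminary reduction to the case $p\mid N$ (which costs nothing since replacing $N$ by $Np$ leaves $\mu'$ unchanged and makes $E$, a form on $\Gamma_1(p)$, a form on $\Gamma_1(N)$) \dash--- so no twisting is needed at all. The conclusion is then reached exactly as in Theorem \ref{theorem1}(ii), via Lemma \ref{twist0} (Miyake's lemma stripping coefficients at indices not prime to $Np$, then Proposition \ref{proposition1} at level $N'$), which is the rigorous version of your closing remarks about prime versus composite indices. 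As written, your argument only covers the case $d=1$.
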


The following corollary follows immediately from Theorem \ref{theorem2}.

\begin{cor}\label{cor2} Retain the setup and notation of Theorem \ref{theorem2}, and assume that $\delta = 0$.
\smallskip

Then we have $a_{\ell}(f_1) \equiv a_{\ell}(f_2) \pod {\p^m}$ for all primes $\ell \nmid Np$ if and only if this congruence holds for all primes $\ell \leq k\mu' /12$ with $\ell \nmid Np$ and we have the congruence
$$
k_1 \equiv k_2 \pod {p^{\lceil \frac{m}{e} \rceil -1} \cdot (p-1)/d}
$$
between the weights.
\end{cor}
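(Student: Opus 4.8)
The plan is to obtain the corollary as a direct conjunction of the two implications already packaged in Theorem \ref{theorem2}, specialized to $\delta = 0$; accordingly the argument should be very short, with no substantive obstacle beyond bookkeeping of the moduli.

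First I would treat the forward implication. Assuming $a_\ell(f_1)\equiv a_\ell(f_2) \pod{\p^m}$ for all primes $\ell \nmid Np$, the congruence holds a fortiori for the finite subfamily of such primes with $\ell \le k\mu'/12$, which is the first of the two asserted conditions. For the weight congruence I would appeal to part (i) of Theorem \ref{theorem2}: its hypothesis is precisely the assumed congruence at all $\ell \nmid Np$, and its conclusion $k_1 \equiv k_2 \pmod{p^{\lceil m/e\rceil - 1 - \delta}\cdot(p-1)/d}$ becomes, upon setting $\delta = 0$, exactly $k_1 \equiv k_2 \pmod{p^{\lceil m/e\rceil - 1}\cdot(p-1)/d}$. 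The auxiliary conclusion $\delta \le \lceil m/e\rceil - 1$ of part (i) is then vacuous, since $\delta = 0 \le \lceil m/e\rceil - 1$ because $m \ge 1$.

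Next I would treat the reverse implication. Here the hypotheses are that the congruence holds for all $\ell \le k\mu'/12$ with $\ell \nmid Np$ and that $k_1 \equiv k_2 \pmod{p^{\lceil m/e\rceil - 1}\cdot(p-1)/d}$. This weight congruence is verbatim the hypothesis of part (ii) of Theorem \ref{theorem2} (which, incidentally, does not itself require $\delta = 0$), so part (ii) directly yields $a_\ell(f_1)\equiv a_\ell(f_2)\pod{\p^m}$ for all primes $\ell \nmid Np$, completing the implication.

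The role of the hypothesis $\delta = 0$ is precisely to align the two halves: part (i) produces a weight congruence modulo $p^{\lceil m/e\rceil - 1 - \delta}\cdot(p-1)/d$, whereas part (ii) requires one modulo $p^{\lceil m/e\rceil - 1}\cdot(p-1)/d$, and these moduli coincide exactly when $\delta = 0$. Since the corollary is thus the conjunction of the two implications of Theorem \ref{theorem2} under this alignment, its entire mathematical content resides in the theorem and there is no remaining obstacle.
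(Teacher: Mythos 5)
Your proposal is correct and is exactly the intended argument: the paper itself offers no separate proof, stating only that the corollary ``follows immediately from Theorem \ref{theorem2}'', and your decomposition\dash---forward direction from part (i) with $\delta=0$ collapsing the modulus to $p^{\lceil \frac{m}{e} \rceil -1}\cdot(p-1)/d$, reverse direction verbatim from part (ii)\dash---is precisely that immediate deduction. Your observation that $\delta=0$ is what aligns the modulus produced by part (i) with the modulus required by part (ii) correctly identifies the role of that hypothesis.
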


\noindent {\bf Remark:} Obtaining results like those in the corollaries, but in more general situations, for instance with $r$ and $\delta$ not necessarily $0$, are obvious problems for future work. We suspect such questions will be more involved.

\section{Proofs} Let us first prove Proposition \ref{proposition1} that turns out to be an easy generalization of a theorem by Sturm, cf.\ \cite{Sturm}.

\begin{proof}[Proof of Proposition \ref{proposition1}:] We prove this by induction on $m$. It will be convenient to prove a slightly more general statement, namely that the proposition holds for forms with coefficients in $(\OO_K)_{\p}$, the localization of $\OO_K$ w.r.t.\ $\p$: If $h$ is such a form we can define $\ord_{\p^m}(h)$ in the same manner as above, and the claim is then that $\ord_{\p^m}(h) > k\mu/12$ implies $\ord_{\p^m}(h) = \infty$.
\smallskip

This statement for $m=1$ follows immediately from a theorem of Sturm, cf.\ Theorem 1 of \cite{Sturm}: If $h$ is a form on $\Gamma_1(N)$ of weight $k$ and coefficients in $(\OO_K)_{\p}$ then there is a number $\alpha\in \OO_K \backslash \p$ such that $\alpha\cdot h$ has coefficients in $\OO_K$; this follows from the `bounded denominators' property for modular forms. Then, if $\ord_{\p^m}(h) > k\mu/12$ we have $\ord_{\p^m}(\alpha\cdot h) > k\mu/12$ and by Sturm this implies $\ord_{\p^m}(\alpha\cdot h) = \infty$ and so also $\ord_{\p^m}(h) = \infty$.
\smallskip

Assume that $m > 1$, and that the proposition in the above slightly more general form is true for powers $\p^a$ of $\p$ with $a<m$. Consider then forms $f_1$ and $f_2$ on $\Gamma_1(N)$ of weight $k$ with coefficients in $(\OO_K)_{\p}$ such that $\ord_{\p^m}(f_1-f_2) > k\mu/12$. Let $\varphi = f_1-f_2$. By assumption we have $\ord_{\p^m}\varphi > k\mu/12$, and therefore also $\ord_{\p^{m-1}}\varphi > k\mu/12$, and hence the induction hypothesis gives $\ord_{\p^{m-1}}\varphi = \infty$. Choose a uniformizer $\pi$ for $\p$, i.e., an element $\pi \in \p \backslash \p^2$.

We see that the form
$$
\psi := \frac{1}{\pi^{m-1}} \cdot \varphi
$$
is a form on $\Gamma_1(N)$ of weight $k$ with coefficients in $(\OO_K)_{\p}$.

Since $\ord_{\p^m}\varphi > k\mu/12$, we must have $\ord_{\p}\psi > k\mu/12$, so that $\ord_{\p}\psi = \infty$ by the induction hypothesis for $m=1$. From this we conclude that $\ord_{\p^m}\varphi = \infty$, as desired.
\end{proof}

In subsequent arguments we occasionally need the following simple and probably well-known lemma.

\begin{lem}\label{lemma}
Let $F'/F$ be a finite extension of number fields. Let $\q$ be a prime ideal of $F$ and let $\Qq$ be a prime ideal of $F'$ over $\q$ of ramification index $\epsilon$. Let $b$ be a positive integer.
\smallskip

Then
$$
\Qq^b \cap F = \q^{\lceil\frac{b}{\epsilon}\rceil}.
$$
\end{lem}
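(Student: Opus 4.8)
The plan is to prove the identity $\Qq^b \cap F = \q^{\lceil b/\epsilon \rceil}$ by a direct computation of $\q$-adic valuations, reducing everything to the single prime $\q$ and its extension $\Qq$ via the fundamental relationship between ramification indices and valuations. Since $\Qq^b \cap F$ is an ideal of $\OO_F$ and we are comparing it to a power of the prime $\q$, it suffices to work locally at $\q$; in particular, both sides have support contained in $\{\q\}$, so the statement reduces to matching the $\q$-exponents. Concretely, I would let $x \in \OO_F$ and ask: for which powers $\q^c$ do we have the equivalence $x \in \Qq^b \iff x \in \q^c$?

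First I would introduce the normalized valuations $v_{\q}$ on $F$ and $v_{\Qq}$ on $F'$, normalized so that each takes value $1$ on a uniformizer of the corresponding prime. The definition of the ramification index $\epsilon = e(\Qq/\q)$ gives the crucial compatibility relation
$$
v_{\Qq}(x) = \epsilon \cdot v_{\q}(x) \quad \text{for all } x \in F,
$$
since a uniformizer $\pi_{\q}$ of $\q$ satisfies $v_{\Qq}(\pi_{\q}) = \epsilon$. Then for $x \in F$ the condition $x \in \Qq^b$ is equivalent to $v_{\Qq}(x) \ge b$, i.e.\ $\epsilon \cdot v_{\q}(x) \ge b$, i.e.\ $v_{\q}(x) \ge b/\epsilon$. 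Because $v_{\q}(x)$ is an integer, this last inequality is equivalent to $v_{\q}(x) \ge \lceil b/\epsilon \rceil$, which is exactly the condition $x \in \q^{\lceil b/\epsilon \rceil}$. Thus an element of $F$ lies in $\Qq^b \cap F$ if and only if it lies in $\q^{\lceil b/\epsilon \rceil}$, which gives the claimed equality of ideals.

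There is one genuine subtlety to address carefully, and I expect it to be the main (though modest) obstacle: the manipulations above use valuations, and valuations naturally detect membership in powers of primes in the localization $(\OO_F)_{\q}$ rather than in $\OO_F$ itself. So strictly speaking the valuation argument shows $\Qq^b \cap F$ and $\q^{\lceil b/\epsilon \rceil}$ have the same image in the localization. To conclude the equality of the original ideals, I would invoke the standard fact that for a Dedekind domain an ideal is determined by its localizations at all primes, together with the observation that both ideals in question are supported only at $\q$ (the ideal $\Qq^b \cap F$ is contained in $\q$, since $\Qq$ lies over $\q$, and contains a power of $\q$). Hence the local equality at $\q$, combined with trivial agreement away from $\q$, upgrades to the desired global equality. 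This Dedekind-domain bookkeeping is routine but is the only place where care is needed beyond the one-line valuation computation.
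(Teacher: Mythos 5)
Your proof is correct, but it takes a different route from the paper's. You argue element-wise with valuations: from $v_{\Qq}|_F=\epsilon\cdot v_{\q}$ and the integrality of $v_{\q}(x)$ you get, for $x\in\OO_F$, that $v_{\Qq}(x)\ge b$ is equivalent to $v_{\q}(x)\ge\lceil b/\epsilon\rceil$, so the ceiling falls out immediately from rounding an integer inequality; you then handle the (real, but routine) point that valuation inequalities characterize membership in powers of $\q$ and $\Qq$ via localization in a Dedekind domain. The paper instead never writes a valuation: it brackets $b$ between consecutive multiples of $\epsilon$, say $a\epsilon<b\le(a+1)\epsilon$, uses the contraction identity $\Qq^{c\epsilon}\cap F=\q^c$ to sandwich $\Qq^b\cap F$ between $\q^{a+1}$ and $\q^a$, and then eliminates the possibility $\Qq^b\cap F=\q^a$ by noting that $\q^a\subseteq\Qq^b$ would force $a\epsilon\ge b$, a contradiction; finally $a+1=\lceil b/\epsilon\rceil$. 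The two arguments rest on the same underlying fact (the $\Qq$-exponent of $\q\OO_{F'}$ is $\epsilon$), but your version is more direct and transfers verbatim to arbitrary extensions of Dedekind domains, with the Dedekind bookkeeping made explicit; the paper's version stays at the level of ideal containments, taking the contraction identity for exact multiples of $\epsilon$ as known, and gets the ceiling indirectly through the two-case elimination.
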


\begin{proof}
There is a non-negative integer $a$ such that $a\epsilon < b \leq (a+1)\epsilon$, and then we
have
$$
\Qq^{(a+1)\epsilon} \subseteq \Qq^b \subseteq \Qq^{a\epsilon} ~.
$$
From this we get that
$$
\q^{a+1} = \Qq^{(a+1)\epsilon} \cap F \subseteq \Qq^b \cap F \subseteq \Qq^{a\epsilon} \cap F = \q^a ~,
$$
and so $\Qq^b \cap F$ is either $\q^a$ or $\q^{a+1}$.
\smallskip

Assume that $\Qq^b \cap F = \q^a$. Then $\q^a \subseteq \Qq^b$, i.e., $\Qq^{a\epsilon} \subseteq \Qq^b$, and so $a\epsilon \geq b$, a contradiction. We conclude that $\Qq^b \cap F = \q^{a+1}$, and since $a+1 = \lceil\frac{b}{\epsilon}\rceil$ by the definition of $a$, we are done.
\end{proof}

Part {\it (i)} of Theorem \ref{theorem1} can be seen as a generalization of a theorem of Serre and Katz, cf.\ Cor.\ 4.4.2 of \cite{Katz}, and Katz' theorem is also the main point of the proof.

\begin{proof}[Proof of part (i) of Theorem \ref{theorem1}:]
Recall that $L$ denotes the Galois closure of $K$. Let us fix a prime $\Pp$ over $\p$ in the Galois closure $L$ of $K$. Thus, the ramification index $e(L,p)$ is the ramification index of $e(\Pp/p)$ of $\Pp$ relative to $p$ in $\Q$. Recall that we denote the ramification index $e(\p/p)$ by $e$.

Let $L_0$ be the subfield of $L$ corresponding to the inertia group $I(\Pp/p)$. Let $\p_0$ be the prime of $L_0$ under $\Pp$.
\smallskip

We now let $I(\Pp/p)$ act on the $f_i$ by acting on their Fourier coefficients. Since $f_1 \equiv f_2 \pod {\p^m}$ we have $\sigma(f_1) \equiv \sigma(f_2) \pod {\Pp^{m\cdot e(\Pp/\p)}}$ for all $\sigma \in I(\Pp/p)$. Letting
$$
F_1 = \sum_{\sigma}\sigma(f_1) \quad \mbox{and} \quad F_2 = \sum_{\sigma}\sigma(f_2)
$$
with the sums taken over all $\sigma \in I(\Pp/p)$, we therefore obtain
$$
F_1 \equiv F_2 \pod {\Pp^{m\cdot e(\Pp/\p)}} ~.
$$

Now, since $F_1$ and $F_2$ are invariant under the action of $I(\Pp/p)$ they actually have coefficients in $L_0$, and we therefore have
$$
F_1 \equiv F_2 \pod {\p_0^{\lceil\frac{m}{e}\rceil}}
$$
since $\Pp^b \cap L_0 = \p_0^{\lceil\frac{b}{e(L,p)}\rceil}$ for non-negative integers $b$, cf.\ Lemma \ref{lemma}, and because
$$
e(L,p) = e(\p/p)e(\Pp/\p) = e\cdot e(\Pp/\p) ~.
$$

Now, the extension $(L_0)_{\p_0}/\Q_p$ of local fields is unramified, and so $(L_0)_{\p_0}$ is the field of fractions of the ring $W = W(\F_{p^f})$ of Witt vectors over $\F_{p^f}$ for some $f$. Since the $F_i$ have integral coefficients in $L_0$, we can view them as having coefficients in $W$.

Now let $a$ be the largest non-negative integer such that all Fourier coefficients of $F_1$ and $F_2$ are divisible by $p^a$. Then the forms $p^{-a}F_1$ and $p^{-a}F_2$ are cusp forms on $\Gamma_1(N) \cap \Gamma_0(p)$ of weights $k_1$ and $k_2$, respectively, and with coefficients in $W$. At least one of these forms has a $q$-expansion that does not reduce to $0$ identically modulo $p$. Their $q$-expansions are congruent modulo
$$
\p_0^{\max \{ 0, \lceil\frac{m}{e}\rceil -a \} }
$$
and hence also modulo
$$
\p_0^{\max \{ 0, \lceil\frac{m}{e}\rceil - r \} }
$$
since certainly $a\le r$ because the coefficients of $q$ for both forms $F_i$ equals $\# I(\Pp/p)$ which is just $e(L,p)$.

By a theorem of Katz, see Cor.\ 4.4.2 as well as Theorem 3.2 of \cite{Katz}, we then deduce that
$$
k_1 \equiv k_2 \pod{p^s(p-1)}
$$
where $s$ is given as in the theorem. Notice that we need our hypothesis $N\ge 3$ because of this reference to \cite{Katz}.
\end{proof}

To prepare for the proof of part {\it (ii)} of Theorem \ref{theorem1} we need the following lemma.
\smallskip

Let us say that a cusp form $h=\sum c_n q^n$ on $\Gamma_1(N)$ and coefficients in $\OO_K$ is an {\it eigenform mod $\p^m$ outside $Np$} if it is normalized and we have $T_{\ell}h \equiv \lambda_{\ell} h \pod{\p^m}$ for all primes $\ell\nmid Np$ with certain $\lambda_{\ell}\in \OO_K$. The same argument as in characteristic $0$ shows that in that case, the mod $\p^m$ eigenvalues $\lambda_{\ell}$ are congruent mod $\p^m$ to the Fourier coefficients $c_{\ell}$.

\begin{lem}\label{twist0} Let $N$ be arbitrary and let $f_1$ and $f_2$ be normalized forms of the same weight $k$ on $\Gamma_1(N)$ and with coefficients in $\OO_K$.

Suppose that $f_1$ and $f_2$ are eigenforms mod $\p^m$ outside $Np$ such that
$$
a_{\ell}(f_1) \equiv a_{\ell}(f_2) \pod {\p^m}
$$
for all primes $\ell \leq k\mu' /12$ with $\ell \nmid Np$.
\smallskip

Then $a_{\ell}(f_1) \equiv a_{\ell}(f_2) \pod {\p^m}$ for all primes $\ell \nmid Np$.
\end{lem}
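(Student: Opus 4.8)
The plan is to reduce the statement to the Sturm-type bound of Proposition~\ref{proposition1} applied at the raised level $N'$, the passage from level $N$ to level $N'$ being exactly what is needed to replace ``congruence of all Fourier coefficients'' by ``congruence of the coefficients at primes $\ell\nmid Np$''.

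First I would form the difference $g=f_1-f_2$, a cusp form of weight $k$ on $\Gamma_1(N)$ with coefficients in $(\OO_K)_{\p}$, and sieve out the Fourier coefficients at indices not prime to $Np$. Concretely, for each prime $q\mid Np$ one removes the coefficients $c_n$ with $q\mid n$ by an operator of the shape $h\mapsto h-V_q U_q h$; performing this for every $q\mid Np$ produces the form
$$
\tilde g \;=\; \sum_{(n,\,Np)=1}\bigl(a_n(f_1)-a_n(f_2)\bigr)\,q^n .
$$
The level at which $\tilde g$ lives is precisely $N'$: sieving out a prime $q\mid N$ costs one extra factor $q$ (whence $\prod_{q\mid N}q$), while sieving out $p$ when $p\nmid N$ costs a factor $p^2$, in agreement with the definition of $N'$ and of $\mu'=[\SL_2(\Z):\Gamma_1(N')]$. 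It suffices to prove $\tilde g\equiv 0\pmod{\p^m}$, since reading off the coefficient at a prime $\ell\nmid Np$ then yields exactly $a_\ell(f_1)\equiv a_\ell(f_2)\pmod{\p^m}$.

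By Proposition~\ref{proposition1} applied to the weight-$k$ form $\tilde g$ on $\Gamma_1(N')$, it is enough to show $\ord_{\p^m}\tilde g>k\mu'/12$; that is, that every coefficient $c_n=a_n(f_1)-a_n(f_2)$ with $(n,Np)=1$ and $n\le k\mu'/12$ vanishes modulo $\p^m$. Here the hypothesis only gives this at prime indices, so the core of the argument is to propagate it to all admissible $n\le k\mu'/12$, using that $f_1$ and $f_2$ are eigenforms mod $\p^m$ outside $Np$. I would argue by induction on $n$ (equivalently, by examining a minimal bad index): if $n=ab$ with $a,b>1$ coprime and both prime to $Np$, then the multiplicativity $a_{ab}(f_i)\equiv a_a(f_i)\,a_b(f_i)\pmod{\p^m}$ together with $c_a\equiv c_b\equiv 0$ (both $a,b<n$) forces $c_n\equiv 0$.

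The main obstacle is the prime-power case $n=\ell^{t}$ with $t\ge 2$ and $\ell\nmid Np$. Here multiplicativity is unavailable and one must instead invoke the Hecke recursion $a_{\ell^{t}}(f_i)\equiv a_\ell(f_i)\,a_{\ell^{t-1}}(f_i)-\ell^{k-1}\,\omega_{i}(\ell)\,a_{\ell^{t-2}}(f_i)\pmod{\p^m}$, where $\omega_i(\ell)$ is the diamond (nebentypus) eigenvalue attached to $f_i$. Since $\ell\le \ell^t\le k\mu'/12$, the inductive hypothesis gives $c_\ell\equiv c_{\ell^{t-1}}\equiv c_{\ell^{t-2}}\equiv 0$, so the two recursions differ only through the term $\ell^{k-1}\bigl(\omega_1(\ell)-\omega_2(\ell)\bigr)a_{\ell^{t-2}}(f_1)$. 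The delicate point is therefore to control the diamond eigenvalues modulo $\p^m$ at the small primes $\ell$: one must show that the agreement of the $T_\ell$-eigenvalues up to the bound already forces $\omega_1(\ell)\equiv\omega_2(\ell)\pmod{\p^m}$ for these $\ell$ (equivalently, that the coefficients at square indices match), and I expect this compatibility between traces and determinants to be the step requiring the most care. Once it is established, every coefficient of $\tilde g$ of index $\le k\mu'/12$ vanishes modulo $\p^m$, Proposition~\ref{proposition1} gives $\tilde g\equiv 0\pmod{\p^m}$, and the lemma follows.
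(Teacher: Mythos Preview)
Your plan is exactly the paper's argument: pass to the sieved forms $f_i'=\sum_{(n,Np)=1}a_n(f_i)\,q^n$ on $\Gamma_1(N')$ (the paper invokes Miyake, Lemma~4.6.5, where you describe the $h\mapsto h-V_qU_qh$ construction by hand), then apply Proposition~\ref{proposition1} at level $N'$ to $f_1'-f_2'=\tilde g$.

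The only difference lies in how the step ``$a_\ell(f_1)\equiv a_\ell(f_2)$ for small primes $\ell$'' $\Rightarrow$ ``$\ord_{\p^m}(f_1'-f_2')>k\mu'/12$'' is handled. The paper dispatches it in a single sentence (``By our hypotheses we can thus conclude\ldots''), implicitly using multiplicativity and the prime-power Hecke recursion to pass from prime indices to all indices coprime to $Np$. You spell this out and rightly isolate the diamond-eigenvalue term $\ell^{k-1}\omega_i(\ell)$ in the recursion as the point needing care. That concern is genuine: with the lemma's literal hypothesis (eigenform mod $\p^m$ only for the $T_\ell$ with $\ell$ prime), the values $\omega_i(\ell)$ are not a priori determined, and the paper supplies no separate argument for them. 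So your proposal is not missing anything relative to the paper's proof---if anything, you have put your finger on a subtlety the paper passes over in silence.
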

\begin{proof} We first apply Lemma 4.6.5 of Miyake \cite{Miyake}: By that lemma we obtain from the $f_i$ forms $f_i'$ of weight $k$ on $\Gamma_1(N')$ by putting:
$$
f_i' := \sum_{\gcd(n,Np)=1} a_n(f_i) \cdot q^n ~.
$$

Here, $N'$ is as defined in the notation section. The forms $f_i'$ obviously still have coefficients in $\OO_K$.
\smallskip

Now, since the $f_i$ are eigenforms mod $\p^m$ outside $Np$, the forms $f_i'$ are also eigenforms mod $\p^m$ outside $Np$, with the same eigenvalues $(a_{\ell}(f_i) \bmod{\p^m})$.

On the other hand, all Fourier coefficient of the forms $f_i'$ at any index $n$ not prime to $Np$ vanishes. By our hypotheses we can thus conclude that
$$
\ord_{\p^m}(f_1'-f_2') > k\mu'/12
$$
and by Proposition \ref{proposition1} this implies $f_1'\equiv f_2' \pod{\p^m}$.
\smallskip

But then $a_{\ell}(f_1) \equiv a_{\ell}(f_2) \pod {\p^m}$ for all primes $\ell \nmid Np$.
\end{proof}

\begin{proof}[Proof of part (ii) of Theorem \ref{theorem1}] Assume without loss of generality that $k_2 \ge k_1$. We can then write:
$$
k_2 = k_1 + t \cdot p^s(p-1)
$$
where $t$ is a non-negative integer.

Now, we have an Eisenstein series $E$ of weight $p-1$ on $\Gamma_1(N)$ with coefficients in $\Z$ and such that $E\equiv 1 \pod{p}$: If $p\ge 5$ we can take $E:=E_{p-1}$ the standard Eisenstein series of weight $p-1$ on $\SL_2(\Z)$. If $p=2$ there is, cf.\ \cite{diamond-shurman} chap.\ 4.8 for instance, an Eisenstein series of weight $1$ on $\Gamma_1(3)$:
$$
E:= 1 - \frac{2}{B_{1,\psi}} \cdot \sum_{n=1}^{\infty} \left( \sum_{d\mid n} \psi(d) \right) \cdot q^n ~;
$$
here, $\psi$ is the primitive Dirichlet character of conductor $3$, and $B_{1,\psi}$ is the first Bernoulli number of $\psi$. One computes $B_{1,\psi} = -\frac{1}{3}$, so that in fact $E$ has coefficients in $\Z$ and reduces to $1$ modulo $2$. Also, $E$ is a modular form on $\Gamma_1(N)$ as we have assumed $3\mid N$ if $p=2$.

If $p=3$ we choose
$$
E:= 1 - 24 \cdot \sum_{n=1}^{\infty} \left( \sum_{d\mid n} d \right) \cdot q^n ~;
$$
this is a modular form of weight $2$ on $\Gamma_1(2)$ and hence also on $\Gamma_1(N)$ as we have $2\mid N$ if $p=3$. Again, cf.\ for instance \cite{diamond-shurman}, chap. 4.6.
\smallskip

With the above choice of $E$ we have in all cases that $E$ is a modular form of weight $p-1$ on $\Gamma_1(N)$ with coefficients in $\Z$ that reduces to $1$ modulo $p$. By induction on $j$ we see that $E^{p^j} \equiv 1 \pod{p^{j+1}}$ for all non-negative integers $j$, and hence also:
$$
E^{t\cdot p^s} \equiv 1 \pod{p^{s+1}}
$$
that we write as $E^{t\cdot p^s} \equiv 1 \pod{\p^{e\cdot (s+1)}}$. Consequently, the form
$$
\tilde{f} := E^{t\cdot p^s} \cdot f_1
$$
satisfies $\tilde{f}\equiv f_1 \pod{\p^{e\cdot (s+1)}}$. If we call $\tilde{a}_n$ the Fourier coefficients of $\tilde{f}$ we have then
$$
\tilde{a}_n \equiv a_n(f_1) \pod{\p^{e\cdot (s+1)}}
$$
and thus consequently:
$$
\tilde{a}_{\ell} \equiv a_{\ell}(f_2) \pod{\p^{\min \{ e\cdot (s+1) , m \} }}
$$
for all primes $\ell \leq k\mu' /12$ with $\ell \nmid Np$, because of our hypothesis on $f_1$ and $f_2$.
\smallskip

Now, $\tilde{f}$ and $f_2$ are both forms on $\Gamma_1(N)$ of weight $k=k_2$. As $f_1$ is an eigenform mod $\p^m$ outside $Np$, we have that $\tilde{f}$ and $f_2$ are both eigenforms mod $\p^{\min \{ e\cdot (s+1) , m \} }$ outside $Np$. Thus, Lemma \ref{twist0} implies that
$$
\tilde{a}_{\ell} \equiv a_{\ell}(f_2) \pod{\p^{\min \{ e\cdot (s+1) , m \} }}
$$
and hence also
$$
a_{\ell}(f_1) \equiv a_{\ell}(f_2) \pod{\p^{\min \{ e\cdot (s+1) , m \} }}
$$
for all primes $\ell \nmid Np$.
\smallskip

Using the definition of $s$ one checks that if either $m\le e$ or ($p>2$ and $r=0$) or ($p=2$, $r=0$, and $m\le 2e$) then we have $e\cdot (s+1) \ge m$. In each of those cases we thus have
$$
a_{\ell}(f_1) \equiv a_{\ell}(f_2) \pod{\p^m}
$$
for all primes $\ell \nmid Np$.
\end{proof}

\begin{proof}[Proof of Theorem \ref{theorem2}:] {\it Proof of part (i):} Consider the representations $\rho_{f_i,\p^m}$ attached to the forms $f_i$.
\smallskip

Since $a_{\ell}(f_1) \equiv a_{\ell}(f_2) \pod {\p^m}$ for all primes $\ell \nmid Np$ we can conclude by Chebotarev's density theorem that the representations $\rho_{f_1,\p^m}$ and $\rho_{f_2,\p^m}$ have the same traces. As $\rho_{f_1,\p}$ is assumed absolutely irreducible, Th\'{e}or\`{e}me 1 of Carayol \cite{Carayol} then implies that $\rho_{f_1,\p^m}$ and $\rho_{f_2,\p^m}$ are isomorphic. Hence, the determinants of these representations are also isomorphic. These determinants are:
$$
\det\rho_{f_i,\p^m} = \left( \psi_i \cdot \chi^{k_i-1} \bmod{\p^m} \right)
$$
where $\chi$ denotes the $p$-adic cyclotomic character $\chi :~ G_{\Q} \rightarrow \Z_p^{\times}$, and the nebentypus characters $\psi_i$ are now seen as finite order characters on $G_{\Q}$. Observe that the characters $\psi_i$ take values in $\OO_K$ so that it makes sense to reduce them mod $\p^m$. Also, reducing $\chi$ mod $\p^m$ is to be taken in the obvious sense.
\smallskip

We can now deduce that
$$
\left( \psi_2 \psi_1^{-1} \bmod{\p^m} \right) _{\mid I_p} = \left( \chi \bmod{\p^m} \right) ^{k_1-k_2} _{\mid I_p} ~.
$$

Now let us view via local class field theory the character $\left( \chi \bmod{\p^m} \right)_{\mid I_p}$ as a character on $\Z_p^{\times}$. As such it factors through $(\Z /\Z p^{\lceil \frac{m}{e} \rceil})^{\times}$ and has order
$$
p^{\lceil \frac{m}{e} \rceil -1} \cdot (p-1) ~;
$$
cf.\ Lemma \ref{lemma}. By definition, the character $\left( \psi_2 \psi_1^{-1} \bmod{\p^m} \right) _{\mid I_p}$ has order $p^{\delta} \cdot d$ with $d$ a divisor of $p-1$. Hence, first we see that $p^{\delta} \cdot d$ is a divisor of $p^{\lceil \frac{m}{e} \rceil -1} \cdot (p-1)$ which implies that $\delta \le \lceil \frac{m}{e} \rceil -1$. Secondly, we then conclude that $k_1-k_2$ is divisible by $p^{\lceil \frac{m}{e} \rceil - 1 - \delta} \cdot (p-1)/d$ as desired.
\medskip

\noindent {\it Proof of part (ii):} Observe first the following. If $p\nmid N$ then upon replacing $N$ by $Np$ and then calculating $\mu'$ we end up with the same number $\mu'$ as had we calculated it from $N$. And of course our forms are also forms on $\Gamma_1(Np)$.

This means that we may well assume that $N$ is divisible by $p$, -- our hypotheses remain unchanged when $N$ is replaced by $Np$ if $N$ is not divisible by $p$.

In particular, we may assume that the group $\Gamma_1(N)$ is contained in $\Gamma_1(p)$.
\smallskip

Now assume without loss of generality that $k_2\ge k_1$. Our hypotheses imply that we can then write:
$$
k_2 = k_1 + t \cdot p^{\lceil \frac{m}{e} \rceil - 1} \cdot (p-1)/d
$$
with $t$ a non-negative integer.
\smallskip

Since $p$ is odd there is a certain Eisenstein series $E$ on $\Gamma_1(p)$ of weight
$$
\kappa := (p-1)/d
$$
and $\p'$-adically integral coefficients in the field $\Q(\mu_{p-1})$ of $(p-1)$'st roots of unity with $\p'$ a prime of $\Q(\mu_{p-1})$ over $p$, and which reduces to $1$ modulo $\p'$: $E$ is the form derived from
$$
G := L(1-\kappa,\omega^{-\kappa})/2 + \sum_{n=1}^{\infty} \left( \sum_{d\mid n} \omega^{-\kappa}(d) \cdot d^{\kappa-1} \right)
$$
by scaling so that the constant term is $1$. Here, $\omega$ is the character that becomes the Teichm\"uller character when viewed as taking values in $\Z_p^{\times}$. Cf.\ Serre, \cite{Serre}, Lemme 10, and Ribet, \cite{Ribet}, \S 2.
\smallskip

Now view $E$ as having coefficients in the compositum $M$ of $K$ and $\Q(\mu_{p-1})$. Pick a prime $\p_1$ of $M$ over $\p$ and $\p'$. Then the ramification index of $\p_1$ relative to $p$ is $e$. We deduce that
$$
E^{p^{\lceil \frac{m}{e} \rceil - 1}\cdot t} \equiv 1 \bmod{\p_1^m}
$$
and so $\tilde{f} := f_1\cdot E^{p^{\lceil \frac{m}{e} \rceil - 1}\cdot t} \equiv f_1 \bmod{\p_1^m}$. As now $\tilde{f}$ is a form on $\Gamma_1(N)$ (as $N$ is divisible by $p$ and $E$ is on $\Gamma_1(p)$) of weight
$$
k_1 + p^{\lceil \frac{m}{e} \rceil - 1}\cdot t \cdot \kappa = k_2
$$
we can finish the argument in the same way as in the proof of part (ii) of Theorem \ref{theorem1}.
\end{proof}

\section{Examples} We used the mathematics software program MAGMA \cite{magma} to find examples illustrating Theorem \ref{theorem1}. We looked for examples of higher congruences and where $p$ is ramified in the field of coefficients. In the notation of this paper, what we are looking for are situations where $e > 1$ and $s \geq 1$. Here are $2$ such examples.
\smallskip

We start with
$$
f_1 = q-8q^4+20q^7+\cdots,
$$
the (normalized) cusp form on $\Gamma_0(9)$ of weight $4$ with integral coefficients, and look for congruences of the coefficients of $f_1$ and $f_2$ modulo powers of a prime above $5$, for a form $f_2$ of weight $k_2$ satisfying $k_2 \equiv 4 \pod {5 \cdot (5-1)}$.

The smallest possible choice of weight for $f_2$ is $k_2 = 24$. There is a newform $f_2$ on $\Gamma_0(9)$ of weight $24$ with coefficients in the number field $K = \Q(\alpha)$ with $\alpha$ a root of $x^4-29258x^2+97377280$. The prime $5$ is ramified in $K$ and has the decomposition $5\OO_K = \p^2\p_2$.

We have $k=24$, $N=9$, $N' = 675$ and $\mu' = 1080$, and we find that $a_{\ell}(f_1) \equiv a_{\ell}(f_2) \pod {\p^3}$ for primes $\ell \leq k\mu' /12 = 2160$ with $\ell \neq 3,5$.

Since $[K:\Q] = 4$, the Galois closure $L$ of $K$ satisfies $[L:\Q] \mid 24$ (in fact $[L:\Q] = 8$ in this case). This shows that $5 \nmid e(L,5)$, i.e., $r=0$. Since we also have $m=3$ and $e = e(\p/5) = 2$, we get $s=1$ as desired. By Theorem \ref{theorem1} we conclude that $a_{\ell}(f_1) \equiv a_{\ell}(f_2) \pod {\p^3}$ for all primes $\ell \neq 3,5$.
\smallskip

Similarly we find a newform $f_3$ on $\Gamma_0(9)$ of weight $k_3 = 44$ with coefficients in a number field $K' = \Q(\beta)$ with $\beta$ a root of
$$
x^8-438896x^6+60873718294x^4-2968020622607040x^2+40426030666768772025 ~.
$$

As before $5$ is ramified in $K'$ and has the decomposition $5\OO_{K'} = \p^4\p_2^2\p_3^2$, and thus $e=4$. One finds that $a_{\ell}(f_1) \equiv a_{\ell}(f_3) \pod {\p^5}$ for primes $\ell \leq k_2\mu' /12 = 3960$ with $\ell \neq 3,5$. The Galois closure $L'$ of $K'$ satisfies $[L':\Q] = 384 \not\equiv 0 \pod 5$, which again implies $r=0$. With $m=5$ we have $s=1$ and conclude by Theorem \ref{theorem1} that $a_{\ell}(f_1) \equiv a_{\ell}(f_3) \pod {\p^5}$ for all primes $\ell \neq 3,5$.
\medskip

We are developing a larger database of similar examples. This will be reported on elsewhere.



\begin{thebibliography}{}
\bibitem{magma}
W.\ Bosma, J.\ Cannon, C.\ Playoust: `The Magma algebra system. I. The user language', J. Symbolic Comput. \textbf{24} (1997), 235--265.

\bibitem{Carayol} H.\ Carayol: `Formes Modulaires et Repr\'{e}sentations Galoisiennes \`{a} valeurs dans un Anneau Local complet', in $p$-adic Monodromy and the Birch and Swinnerton-Dyer Conjecture (Boston, MA, 1991), Contemp. Math. \textbf{165} (1994), Amer. Math. Soc., 213--237.

\bibitem{icik} I.\ Chen, I.\ Kiming: `On modular mod $\p^m$ Galois representations', in preparation.

\bibitem{diamond-shurman} F.\ Diamond, J.\ Shurman: `A first course in modular forms', Graduate texts in Mathematics \textbf{228}, Springer, 2005.

\bibitem{Katz} N.\ M.\ Katz: `$p$-adic properties of modular schemes and modular forms', in Modular Functions of One Variable III, Lecture Notes in Math. \textbf{350} (1973), 69--190.

\bibitem{kohnen} W.\ Kohnen: `On Fourier coefficients of modular forms of different weights', Acta Arith. \textbf{113} (2004), 57--67.

\bibitem{Miyake} T.\ Miyake: `Modular Forms', Springer-Verlag, 1989.

\bibitem{Ribet} K.\ Ribet: `Report on mod $\ell$ representations of $\Gal(\overline{\Q}/\Q)$', Motives, Proc. Sympos. Pure Math., 55, Part 2, 639--676, Amer. Math. Soc., 1994.

\bibitem{Serre} J.-P.\ Serre: `Formes modulaires et fonctions z\^{e}ta $p$-adiques', Modular functions of one variable, III, pp. 191--268. Lecture Notes in Math. \textbf{350}, Springer, 1973.

\bibitem{Sturm} J.\ Sturm: `On the Congruence of Modular Forms', in Number Theory, Lecture Notes in Math. \textbf{1240} (1987), 275--280.
\end{thebibliography}
\end{document}